\newtheorem{theorem}{Theorem}[section]
\newtheorem{proposition}[theorem]{Proposition}
\newtheorem{corollary}[theorem]{Corollary}
\theoremstyle{definition}
\theoremstyle{remark}
\newtheorem{remark}[theorem]{Remark}
\numberwithin{equation}{section} \numberwithin{figure}{section}
\title{Graph-induced operators: Hamiltonian cycle enumeration via fermion-zeon convolution}
\author{G. Stacey Staples\footnote{Department of Mathematics and
Statistics, Southern Illinois University Edwardsville,
Edwardsville, IL 62026-1653,USA, email: sstaple@siue.edu}}
\begin{document}
\date{}

\maketitle

\begin{abstract} 
Operators are induced on fermion and zeon algebras by the action of adjacency matrices and combinatorial Laplacians on the vector spaces spanned by the  graph's vertices.  Properties of the algebras automatically give information about the graph's spanning trees and vertex coverings by cycles \& matchings.   Combining the properties of operators induced on fermions and zeons gives a fermion-zeon convolution that recovers the number of Hamiltonian cycles in an arbitrary graph.  The mathematics underlying the graph-theoretic interpretation of these operators is provided by Kirchhoff's theorem and by the seminal works of Goulden and Jackson and Liu, who established formulas for enumeration of Hamiltonian cycles and paths using determinants and permanents of adjacency matrices.  
\\
AMS Subj. Classifications: 05C30, 05E10, 15A66, 47C05, 81R05
\\
Keywords: cycles, trees, fermions, zeons, Clifford algebras, graph enumeration
\end{abstract}

\section{Introduction \& Notational Preliminaries}

The interface between operator theory and graph theory has been fertile ground for research in recent years.  In the context of quantum probability, the interrelationships have been explored in works by Accardi, Obata, Hashimoto, and others~\cite{ABGO, hashimoto}.   

Fermion algebras are isomorphic to Clifford algebras of appropriate signature.  In the Clifford algebra context, properties of spinors have been applied to the study of maximal cliques~\cite{budinich} and graph enumeration problems~\cite{HarrisStaples2012}.  Although Clifford algebras provide a comprehensive framework for everything appearing here, introducing the more general Clifford algebra formalism is unnecessary for matters at hand.  

Zeon algebras can be thought of as commutative analogues of fermion algebras.  Zeon algebras were defined and applied to graph theory in \cite{ICCA7}, although the name ``zeon'' first appeared in work by Feinsilver~\cite{Feinsilverzeons}.  Weighting the vertices of a graph with zeon generators allows one to construct a {\em nilpotent adjacency matrix}, $\mathfrak{A}$, whose entries are generators of the algebra.  The matrix is very convenient for performing symbolic computations and allows enumeration of cycles by considering traces of matrix powers~\cite{CAMWA, ICCA7}.  

Zeons underlie a significant portion of Liu's work on enumeration of Hamiltonian cycles~\cite{liu}, and their combinatorial properties have been developed in a number of the current author's joint works  (e.g., \cite{JPA,YEUJC,CAMWA}).  Combinatorial properties of zeons are also useful for defining partition-dependent stochastic integrals~\cite{CoSA,JTP}.  In other recent work, Neto and dos Anjos~\cite{NetoSIAM} establish a number of combinatorial identities using Grassmann-Berezin type integration on zeon algebras.   

The main results of the current paper are found in Section \ref{Results Section}.  After a review of essential graph theory and definitions of graph-induced operators, operators are induced by the combinatorial Laplacian and adjacency matrix on the fermion and zeon algebras, respectively.  

In Section \ref{spanning trees section}, the combinatorial Laplacian of a graph $G$ on $n$ vertices is used to induce an operator on the $n$-particle fermion algebra.  In Proposition \ref{spanning trees via fermions}, the normalized trace of the level-$(n-1)$ induced operator is shown to be the number of spanning trees of $G$. 

In Section \ref{cycle-matching section}, the adjacency matrix of $G$ is used to induce an operator on the $n$-particle zeon algebra.  Proposition \ref{induced zeons} reveals the trace of the level-$k$ induced operator to be sums of cycle-matching covers of the vertices of $k$-vertex subgraphs of $G$.  As a corollary, a sort of cycle cover-perfect matching convolution is found as the level-$n$ trace.

Combining the properties of operators induced on fermions and zeons gives a fermion-zeon convolution that recovers the number of Hamiltonian cycles in an arbitrary graph.  As shown in Section \ref{unifying theorem subsection}, this convolution takes the form of an operator trace.  

The combinatorics underlying the graph-theoretic interpretation of these operators is provided by Kirchhoff's theorem and by the seminal works of Goulden and Jackson~\cite{goulden} and  Liu~\cite{liu}, establishing formulas for enumeration of Hamiltonian cycles and paths using determinants and permanents of adjacency matrices.    The interpretation of Liu's formula as fermion-zeon convolution is original with the current author.
 
\section{Fermions \& Zeons}

Denote by $\mathfrak{F}_n$ the associative algebra over $\mathbb{R}$ generated by the collection \hfill\break
 $\{\mathfrak{f}_1, \ldots, \mathfrak{f}_n, {\mathfrak{f}_1}^\dagger, \ldots, {\mathfrak{f}_n}^\dagger\}$ satisfying the canonical anticommutation relations (CAR): \begin{eqnarray}
\{\mathfrak{f}_i, \mathfrak{f}_j\}_+=\{{\mathfrak{f}_i}^\dagger, {\mathfrak{f}_j}^\dagger\}_+&=&0,\label{CAR1}\\
{\mathfrak{f}_j}^2={{\mathfrak{f}_j}^\dagger}^2&=&0,\label{CAR2}\\
\{\mathfrak{f}_i,{\mathfrak{f}_j}^\dagger\}_+&=&\delta_{ij}.
\end{eqnarray}
The algebra $\mathfrak{F}_n$ is called the {\em $n$-particle fermion algebra}.  The generators $\mathfrak{f}_i$ and ${\mathfrak{f}_i}^\dagger$ are referred to as the $i$th {\em annihilation operator} and $i$th {\em creation operator}, respectively.

Zeon algebras can be regarded as commutative subalgebras of fermion algebras.  It is not difficult to see that \eqref{CAR1} implies commutativity of disjoint pairs of fermions; i.e., letting $\zeta_j=\mathfrak{f}_{2j-1}\mathfrak{f}_{2j}\in\mathfrak{F}_{2n}$ for $j=1, \ldots, n$ ensures that the zeons satisfy the zeon canonical commutation relations (CCR): \begin{eqnarray*}
[\zeta_i, \zeta_j]&=&0,\\
{\zeta_j}^2&=&0.
\end{eqnarray*}
Here, $\zeta_j$ is defined as a pair of fermion annihilators, although creators could have been used as easily.

Observe that the collection $\{\zeta_1, \ldots, \zeta_n\}$ with multiplication satisfying $\zeta_i\zeta_j=0\Leftrightarrow i=j$ and satisfying the zeon CCR generates a semigroup of order $2^n-1$.  Appending the unit scalar $\zeta_\emptyset=1$, one obtains a semigroup $\mathcal{Z}_n$ of order $2^n$.  Adopting multi-index notation, one writes $\zeta_I=\prod_{\ell\in I}\zeta_\ell$ for any multi-index $I\subseteq[n]$.  

The {\em $n$-particle zeon algebra} is defined as the semigroup algebra $\mathfrak{Z}_n:=\mathbb{R}\mathcal{Z}_n$.~\footnote{Because the zeon algebra is a subalgebra of a Clifford algebra of appropriate signature, it is often denoted by ${\mathcal{C}\ell_n}^{\rm nil}$.}  Employing multi-index notation, arbitrary elements of $\mathfrak{Z}_n$ are written in the form $u=\displaystyle\sum_{I\subseteq[n]}u_I\,\zeta_I$.  Combinatorial properties of zeons have been developed in a number of works in recent years\cite{Feinsilverzeons, FeinsilverMcSorleyIJC, CoSA,ICCA7}.

\section{Graph-Induced Operators}\label{Results Section}

Assume $G=(V,E)$ is a graph on $n$ vertices.  For $i=1, \ldots, n$ associate vertex $\mathbf{v}_i\in V$ with the $i$th fermion creation/annihilation pair, $\gamma_i=\displaystyle\frac{1}{\sqrt{2}}(\mathfrak{f}_i+{\mathfrak{f}_i}^\dagger)\in\mathfrak{F}_n$.  Note that the fermion CAR imply $\{\gamma_i, \gamma_j\}_+=0$ and ${\gamma_i}^2=1$ for $i=1, \ldots, n$.  Hence, there is no cause for concern in defining multi-index notation by writing the ordered product\begin{equation*}
\gamma_I:=\prod_{j\in I}\gamma_j=\frac{1}{2^{|I|/2}}\displaystyle\prod_{j\in I}(\mathfrak{f}_j+{\mathfrak{f}_j}^\dagger).
\end{equation*}

\begin{remark}
The collection $\{\gamma_i: 1\le i\le n\}$ generates the $2^n$-dimensional Euclidean Clifford algebra, commonly denoted $\mathcal{C}\ell_n$.  This algebra is isomorphic to {\em fermion toy Fock space}.  
\end{remark}

For convenience, $V$ is regarded as both the vertex set of $G$ and as the vector space generated by the vertices of $G$.  Let $\mathcal{L}(V)$ denote the space of linear operators on $V$.  Beginning with an operator $X\in\mathcal{L}(V)$, the corresponding operator $\Psi\in\mathcal{L}(\mathfrak{F}_n)$ induced by $X$ is defined naturally by linear extension of the following action on basis blades: \begin{equation}
\Psi(\gamma_I):=\prod_{j\in I}X(\gamma_j).
\end{equation}
Operators are induced in similar manner on $\mathfrak{Z}_n$.  For convenience, when $\Psi$ is an operator on the algebra $\mathcal{A}$ induced by the operator $X$ acting on the vector space of generators of $\mathcal{A}$, it will be convenient to write $X\twoheadrightarrow\Psi$.  

Because the algebras $\mathfrak{F}_n$ and $\mathfrak{Z}_n$ have natural grade decompositions, an operator $X$ on $V$ similarly induces  operators $\Psi^{(\ell)}$ and $\Xi^{(\ell)}$, respectively,  on the grade-$\ell$ subspaces of the algebras.  More concisely,  
\begin{equation*}
\Psi^{(\ell)}(\gamma_I)=\begin{cases}
\prod_{j\in I}X(\gamma_j) &|I|=\ell,\\
0&\text{\rm otherwise.}
\end{cases}
\end{equation*}

Using Dirac notation, one writes $\Psi^{(\ell)}(\gamma_I)=\langle \gamma_I\vert\Psi^{(\ell)}$.  The use of Dirac notation in matrix representations is made clear by the following:  Given $n\times n$ matrix $A$, the $i$th row and $j$th column of $A$ are given by $\langle i\vert A\vert j\rangle$.  In this context, the matrix $A$ acts on row vectors by {\em right} multiplication.  While this may be less common in linear algebra, it makes sense when dealing with adjacency matrices, combinatorial Laplacians, and transition matrices associated with Markov chains.

Once again, let $G=(V,E)$ be a graph, where $V$ is regarded both as the vertex set of  $G$ and also the vector space spanned by vertices of $G$.  Let $A$ denote an operator on $V$, suppose $A\twoheadrightarrow\Phi\in\mathcal{L}(\mathfrak{F}_n)$, and suppose $A\twoheadrightarrow\Xi\in\mathcal{L}(\mathfrak{Z}_n)$.  Under the vertex-fermion and vertex-zeon associations $\mathbf{v}_i\mapsto \gamma_i$ and $\mathbf{v}_i\mapsto\zeta_i$, respectively, there should be no ambiguity in adopting the conventions $\langle\mathbf{v}_I\vert \Phi\vert \mathbf{v}_J\rangle =\langle \gamma_I\vert\Phi\vert\gamma_J\rangle$ and $\langle\mathbf{v}_I\vert \Xi\vert \mathbf{v}_J\rangle = \langle \zeta_I\vert\Xi\vert\zeta_J\rangle$.

\subsection{Graph preliminaries}

A \textit{graph} $G=\left(V,E\right)$ is a collection of vertices $V$ and a set $E$ of unordered pairs\footnote{When the edges are ordered pairs, the graph is said to be a {\em directed graph}, or {\em digraph}.} of vertices called {\em edges}.    Two vertices $v_i,v_j\in V$ are {\em adjacent} if there exists an edge $e=\{v_i,v_j\}\in E$.  A graph is \textit{finite} if $V$ and $E$ are finite sets. A \textit{loop} in a graph is an edge of the form $\{v,v\}$.  A graph is said to be \textit{simple} if it contains no loops and no unordered pair of vertices appears more than once in $E$.

A {\em $k$-walk} $\{v_0,\ldots,v_k\}$ in a graph $G$ is a sequence of vertices in $G$ with {\em initial vertex} $v_0$ and {\em terminal vertex} $v_k$ such that there exists an edge $\{v_j,v_{j+1}\}\in E$ for each $0\le j\le k-1$.  A $k$-walk contains $k$ edges.  \index{k-walk} A {\em self-avoiding walk} is a walk in which no vertex appears more than once.  A {\em closed k-walk} is a $k$-walk whose initial vertex is also its terminal vertex.  A {\em $k$-cycle} is a self-avoiding closed $k$-walk with the exception $v_0=v_k$.  A {\em Hamiltonian cycle} is an $n$-cycle in a graph on $n$ vertices; i.e., it contains $V.$ 

A {\em tree} is a connected graph that contains no cycles.  A {\em spanning tree} in $G$ is a subgraph that contains all vertices of $G$ and is a tree.

Given a graph $G=(V,E)$, a {\em matching} of $G$ is a subset $E_1\subset E$ of the edges of $G$ having the property that no pair of edges in $E_1$ shares a common vertex.  The largest possible matching on a graph with $n$ vertices consists of $n/2$ edges, and such a matching is called a {\em perfect matching}\index{graph! matching}.

\subsection{Spanning trees via fermion trace}\label{spanning trees section}

When $G=(V,E)$ is a simple graph on $n$ vertices, the {\em combinatorial Laplacian} of $G$ is the $n\times n$ matrix $L=(\ell_{ij})$ defined by
\begin{equation*}
\ell_{ij}=\begin{cases}
{\rm deg}(v_i)&\text{\rm if }i=j,\\
-1&\text{\rm if }\{v_i, v_j\}\in E,\\
0&\text{\rm otherwise.}
\end{cases}
\end{equation*}
Equivalently, if $D$ is the diagonal matrix of vertex degrees and $A$ is the adjacency matrix of $G$, then $L=D-A$.

Letting $\lambda_0\le \lambda_1\le\cdots\le\lambda_{n-1}$ denote the eigenvalues of $L$, known properties of $L$ include (but are not limited to) the following:

\begin{enumerate}
\item The minimum eigenvalue $\lambda_0$ is always zero.~\footnote{Let $\mathbf{v}_0=(1,\ldots, 1)$ and observe that  $L(\mathbf{v}_0)=\mathbf{0}$.}
\item The number of times zero appears as an eigenvalue is equal to the number of connected components of $G$.

\item When $G$ has multiple connected components, $L$ is block-diagonal.
\end{enumerate}

Let ${\rm Tr}(\cdot)$ denote {\em normalized} operator trace.  In particular, if $A$ acts on a finite-dimensional vector space $V$, then \begin{equation}
{\rm Tr}(A):=\frac{1}{\dim(V)}\sum_{i=1}^{\dim(V)} \langle \mathbf{v}_i\vert A\vert \mathbf{v}_i\rangle
\end{equation}
for any orthonormal basis $\{\mathbf{v}_i: 1\le i\le n\}$ of $V$.

The following well known result of Kirchhoff~\cite{kirchhoff} gives meaning to the normalized trace of operators on $\mathfrak{F}_V$ induced by the combinatorial Laplacian.  It is recalled here without proof.

\begin{theorem}[Kirchhoff]\label{kirchhoff's theorem}
For a given connected graph $G$ with $n$ labeled vertices, let $\lambda_1, \ldots, \lambda_{n-1}$ be the non-zero eigenvalues of its Laplacian matrix. Then the number of spanning trees of $G$ is \begin{equation*}
t_G=\frac{1}{n}\lambda_1\lambda_2\cdots\lambda_{n-1}.
\end{equation*}
Equivalently the number of spanning trees is equal to any cofactor of the Laplacian matrix of $G$.
\end{theorem}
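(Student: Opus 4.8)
The statement is classical, and the plan is to prove it by the incidence-matrix method: first establish the cofactor form, then deduce the eigenvalue-product form from it. Fix an arbitrary orientation of each edge of $G$ and let $B$ be the associated $n\times|E|$ signed incidence matrix, with $B_{v,e}=+1$ if $v$ is the head of the oriented edge $e$, $B_{v,e}=-1$ if $v$ is the tail, and $B_{v,e}=0$ otherwise. Computing $(BB^{T})_{ij}$ directly, the diagonal entry counts the edges incident to $v_i$, i.e.\ $\deg(v_i)$, while for $i\neq j$ the only contribution comes from the edge $\{v_i,v_j\}$ if it is present, which contributes $(+1)(-1)=-1$. Hence $L=BB^{T}$, independently of the chosen orientation.

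Next I would reduce the cofactor statement to the evaluation of a single minor. Since $G$ is connected, $\ker L=\langle\mathbf{1}\rangle$ with $\mathbf{1}=(1,\dots,1)$, so $\mathrm{rank}\,L=n-1$ and $\det L=0$. From $L\cdot\mathrm{adj}(L)=\mathrm{adj}(L)\cdot L=(\det L)I=0$, every column and every row of $\mathrm{adj}(L)$ lies in $\langle\mathbf{1}\rangle$ (using symmetry of $L$ to identify left and right kernels); this forces $\mathrm{adj}(L)=c\,J$ for a scalar $c$, where $J$ is the all-ones matrix, and $c\neq 0$ because $\mathrm{rank}\,L=n-1$. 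Since the $(j,i)$ entry of $\mathrm{adj}(L)$ is the $(i,j)$ cofactor of $L$, every cofactor of $L$ equals $c$. It therefore suffices to compute the principal minor $\det L_n$, where $L_n$ is $L$ with its last row and column deleted; writing $B_0$ for $B$ with its last row deleted, $L_n=B_0B_0^{T}$.

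Now apply the Cauchy--Binet formula to obtain
\[
\det L_n=\det\!\bigl(B_0B_0^{T}\bigr)=\sum_{\substack{S\subseteq E\\ |S|=n-1}}\bigl(\det B_0[S]\bigr)^{2},
\]
where $B_0[S]$ is the $(n-1)\times(n-1)$ submatrix of $B_0$ on the columns indexed by $S$. The combinatorial core is the lemma that $\det B_0[S]\in\{-1,0,1\}$ for every such $S$, with value $\pm1$ precisely when the edge set $S$ is a spanning tree of $G$. If $S$ contains a cycle, then traversing that cycle produces a signed linear dependence among the corresponding columns of $B$, hence among those of $B_0[S]$, so $\det B_0[S]=0$. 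If $S$ is acyclic with $n-1$ edges it is a spanning tree, and repeatedly deleting a leaf other than $v_n$ together with its incident edge and expanding along the corresponding row (which carries a single $\pm1$ entry) shows inductively that $\det B_0[S]=\pm1$. Substituting into the Cauchy--Binet sum gives $\det L_n=t_G$, which is the cofactor form of the theorem.

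Finally I would recover the eigenvalue formula from the cofactor formula. Because $\lambda_0=0$ one has $\det(xI-L)=x\prod_{i=1}^{n-1}(x-\lambda_i)$, whose coefficient of $x$ is $(-1)^{n-1}\lambda_1\cdots\lambda_{n-1}$. On the other hand, the coefficient of $x$ in $\det(xI-L)$ equals $(-1)^{n-1}$ times the sum of the $n$ principal $(n-1)\times(n-1)$ minors of $L$, each of which equals $t_G$ by the previous paragraph, so that coefficient is $(-1)^{n-1}n\,t_G$. Equating the two expressions yields $t_G=\tfrac1n\lambda_1\lambda_2\cdots\lambda_{n-1}$. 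The step I expect to be the main obstacle is the combinatorial lemma identifying $\det B_0[S]=\pm1$ with the property that $S$ is a spanning tree: the implication ``$S$ acyclic $\Rightarrow$ $B_0[S]$ nonsingular'' (equivalently, that the reduced incidence matrix of a spanning forest has full rank $n-1$) is where the tree structure is genuinely used and where the leaf-peeling induction, or an explicit path-based construction of the inverse, must be carried out with care.
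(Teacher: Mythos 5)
Your proof is correct, but note that the paper does not prove this statement at all: it is recalled as a classical result, explicitly ``without proof,'' with a citation to Kirchhoff's 1847 paper, and is then used as a black box in Proposition \ref{spanning trees via fermions}. So there is no in-paper argument to compare against; what you have written is the standard matrix--tree proof, and it is sound. The factorization $L=BB^{T}$ via an arbitrarily oriented incidence matrix, the adjugate argument ($L\,\mathrm{adj}(L)=0$ together with $\ker L=\langle\mathbf{1}\rangle$ forcing $\mathrm{adj}(L)=cJ$, hence equality of all cofactors), the Cauchy--Binet expansion of $\det(B_0B_0^{T})$ over $(n-1)$-edge subsets, and the unimodularity lemma ($\det B_0[S]=\pm1$ exactly when $S$ is a spanning tree, $0$ when $S$ contains a cycle) are all correctly assembled, and your passage from the cofactor form to the eigenvalue form by comparing the coefficient of $x$ in $\det(xI-L)$ with $(-1)^{n-1}$ times the sum of the $n$ principal $(n-1)\times(n-1)$ minors is also right. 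You correctly flag the leaf-peeling induction as the step where the tree structure is genuinely used; that induction does go through (an acyclic subgraph on $n$ vertices with $n-1$ edges is a spanning tree, hence has a leaf distinct from $v_n$, and row expansion along that leaf's row reduces to the same situation on a smaller tree), so there is no gap. One could tighten the phrasing in the adjugate step --- ``every column and every row of $\mathrm{adj}(L)$ lies in $\langle\mathbf{1}\rangle$'' should be read as: columns lie in the right kernel and rows in the left kernel, which coincide by symmetry --- but the conclusion $\mathrm{adj}(L)=cJ$ is correct as stated.
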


Viewing the Laplacian as an operator on the vector space $V$ spanned by vertices of $G$, the combinatorial properties of fermions now allow numbers of spanning trees to be recovered from the trace of the induced operator.   

\begin{proposition}\label{spanning trees via fermions}
Let $L$ denote the combinatorial Laplacian associated with a finite graph, $G=(V,E)$, and let $\Phi$ be the operator on the fermion algebra $\mathfrak{F}_n$ induced by $L$; i.e.,  $L\twoheadrightarrow \Phi$. Then, the normalized trace of the induced map at level $n-1$ satisfies the following:
\begin{equation*}
{\rm Tr}\left(\Phi^{(n-1)}\right)=\sharp\{\text{\rm spanning trees of } G\}.
\end{equation*}
\end{proposition}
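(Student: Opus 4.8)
The plan is to reduce the statement to Kirchhoff's theorem by computing the diagonal matrix entries of $\Phi^{(n-1)}$ in the natural basis of the grade-$(n-1)$ subspace and recognizing each of them as a principal cofactor of $L$. That subspace is spanned by the $n$ blades $\gamma_{I_k}$ with $I_k:=[n]\setminus\{k\}$, $k=1,\dots,n$, so it has dimension $n$ and
\[
{\rm Tr}\left(\Phi^{(n-1)}\right)=\frac{1}{n}\sum_{k=1}^{n}\langle \gamma_{I_k}\vert\Phi^{(n-1)}\vert \gamma_{I_k}\rangle .
\]
Hence everything comes down to evaluating one diagonal entry $\langle \gamma_I\vert\Phi^{(n-1)}\vert\gamma_I\rangle$ for a fixed $I=\{i_1<\cdots<i_{n-1}\}$.

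First I would expand, using $L(\gamma_j)=\sum_{k}\ell_{jk}\gamma_k$ and the definition of $\Phi^{(n-1)}$,
\[
\Phi^{(n-1)}(\gamma_I)=L(\gamma_{i_1})\cdots L(\gamma_{i_{n-1}})=\sum_{k_1,\dots,k_{n-1}}\left(\prod_{m=1}^{n-1}\ell_{i_m,k_m}\right)\gamma_{k_1}\gamma_{k_2}\cdots\gamma_{k_{n-1}},
\]
and then decide which monomials $\gamma_{k_1}\cdots\gamma_{k_{n-1}}$ equal $\pm\gamma_I$. Using the relations $\{\gamma_i,\gamma_j\}_+=0$ and ${\gamma_i}^2=1$ recorded above, such a monomial collapses to $\pm\gamma_S$, where $S$ is the set of indices occurring an odd number of times among $k_1,\dots,k_{n-1}$; to get $\pm\gamma_I$ one needs $S=I$. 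Since $|I|=n-1$ is exactly the number of factors and each element of $I$ must occur at least once, a pigeonhole count forces $(k_1,\dots,k_{n-1})$ to be an arrangement of $(i_1,\dots,i_{n-1})$, that is, $k_m=\sigma(i_m)$ for some bijection $\sigma$ of $I$; then there are no repeated factors, so $\gamma_{\sigma(i_1)}\cdots\gamma_{\sigma(i_{n-1})}={\rm sgn}(\sigma)\,\gamma_I$. Collecting terms yields
\[
\langle \gamma_I\vert\Phi^{(n-1)}\vert\gamma_I\rangle=\sum_{\sigma}{\rm sgn}(\sigma)\prod_{m=1}^{n-1}\ell_{i_m,\sigma(i_m)}=\det L[I,I],
\]
the principal minor of $L$ on the rows and columns indexed by $I$. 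The step I expect to need the most care is precisely this Clifford-algebra bookkeeping: ruling out the non-injective index functions by the counting argument and then verifying that the surviving monomial contributes exactly ${\rm sgn}(\sigma)$ rather than a sign twisted by the reordering of the index list into increasing order — together with the minor but essential point that the normalizing dimension here is $n$, not $2^n$.

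To finish, observe that for $I=I_k=[n]\setminus\{k\}$ the minor $\det L[I,I]$ is obtained from $L$ by deleting row $k$ and column $k$, so it equals the $(k,k)$-cofactor of $L$ (the sign $(-1)^{2k}$ being trivial). Therefore ${\rm Tr}(\Phi^{(n-1)})=\frac{1}{n}\sum_{k=1}^{n}\det L[I_k,I_k]$ is the average of the $n$ diagonal cofactors of $L$. If $G$ is connected, Theorem \ref{kirchhoff's theorem} in its cofactor form says every cofactor of $L$ equals $t_G$, so this average is $t_G=\sharp\{\text{spanning trees of }G\}$. If $G$ is disconnected it has no spanning trees, while ${\rm rank}(L)\le n-2$ makes every $(n-1)\times(n-1)$ minor of $L$ vanish, so both sides equal $0$; this settles the general finite graph.
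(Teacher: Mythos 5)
Your proposal is correct and follows essentially the same route as the paper: identify each diagonal entry $\langle\gamma_I\vert\Phi^{(n-1)}\vert\gamma_I\rangle$ with the principal minor $\det L[I,I]$, i.e.\ a diagonal cofactor of $L$, and invoke Kirchhoff's theorem, with the normalization $1/n$ coming from the dimension of the grade-$(n-1)$ subspace. You merely supply more detail than the paper does on the anticommutation bookkeeping and you add the (correct) observation that the disconnected case gives $0=0$, which the paper leaves implicit.
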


\begin{proof}
First, for any $k\times n$ matrix $B=(b_{ij})$, associate the $i$th row of $B$ with the fermion vector $\mathbf{b}_i:=\displaystyle\sum_{j=1}^n \frac{b_{ij}}{\sqrt{2}}\,(\mathfrak{f}_j+{\mathfrak{f}_j}^\dagger)$.   For multi-index $I$ of cardinality $k$, let $B_I$ denote the coefficient submatrix of $B$ whose columns are indexed by elements of $I$.  The fermion CAR guarantee that the coefficient of $\gamma_I$ in the expansion of $\mathfrak{z}=\displaystyle \prod_{\ell=1}^k \mathbf{b}_\ell$ is then $\langle\mathfrak{z}, \gamma_I\rangle = {\rm det}(B_I)$.   

By definition of $L\twoheadrightarrow\Phi$, one immediately finds $\langle \mathbf{v}_I\vert \Phi^{(n-1)}\vert \mathbf{v}_I\rangle = {\rm \det}(L_I)$, which is a cofactor of the combinatorial Laplacian of $G$.  By Kirchhoff's theorem, the number of spanning trees of a connected graph $G$ on $n$ vertices is equal to any cofactor of the Laplacian of $G$.  Summing over the $n$ diagonal elements of the level $n-1$ induced operator then gives the stated result.
\end{proof}

\subsection{Cycle-matching covers via zeon trace}\label{cycle-matching section}

Let $A$ be the adjacency matrix of a graph with vertex set $V$ of cardinality $n$.  Regarding $A$ as a linear operator on ${\rm span}(V)$, an operator $\Xi$ is induced on $\mathfrak{Z}_n$ by multiplication.  Graph-theoretic properties of $\Xi$ are immediately seen.

\begin{proposition}\label{induced zeons}
Let $A$ denote the adjacency matrix of a simple graph with vertex set $V$, viewed as a linear transformation on the vector space generated by $V$.   Let $\Xi^{(k)}$  denote the corresponding operator induced on the grade-$k$ subspace of the zeon algebra $\mathfrak{Z}_V$.  For fixed subset $I\subseteq V$, let $X_I$ denote the number of disjoint cycle covers of the subgraph induced by $I$. Similarly,  let $M_J$ denote the number of perfect matchings on the subgraph induced by $J\subseteq V$ (nonzero only for $J$ of even cardinality).  Then,
\begin{equation*}
{\rm tr}(\Xi^{(k)})=\displaystyle\sum_{{I\subset V}\atop{|I|=k}}\sum_{J\subseteq I}X_{I\setminus J}M_J.
\end{equation*}
\end{proposition}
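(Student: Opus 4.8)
The plan is to compute the diagonal matrix element $\langle \zeta_I \vert \Xi^{(k)}\vert \zeta_I\rangle$ directly from the definition of the induced operator and then identify the resulting polynomial expression combinatorially. First I would observe that, by definition of $A \twoheadrightarrow \Xi$, we have $\Xi^{(k)}(\zeta_I) = \prod_{j\in I} A(\zeta_j)$ where, under the vertex-zeon identification $\mathbf{v}_j \mapsto \zeta_j$, $A(\zeta_j) = \sum_{\ell} a_{j\ell}\,\zeta_\ell$. Expanding the product over $j\in I$ and using the zeon relations ($\zeta_\ell^2 = 0$, commutativity), the coefficient of $\zeta_I$ in $\prod_{j\in I}A(\zeta_j)$ is exactly the sum, over all functions $\sigma : I \to I$ whose image is all of $I$ — i.e. over all permutations $\sigma$ of $I$ — of $\prod_{j\in I} a_{j\,\sigma(j)}$. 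In other words, $\langle \zeta_I\vert \Xi^{(k)}\vert \zeta_I\rangle = \operatorname{per}(A_I)$, the permanent of the principal submatrix of $A$ indexed by $I$. This is the zeon analogue of the determinant identity used in the proof of Proposition \ref{spanning trees via fermions}, with the permanent replacing the determinant because zeons commute.

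Next I would give the combinatorial interpretation of $\operatorname{per}(A_I)$. Since $A$ is the adjacency matrix of a simple graph, $A_I$ has zero diagonal, and each permutation $\sigma$ of $I$ contributing a nonzero term $\prod_{j\in I} a_{j\,\sigma(j)}$ must be fixed-point-free and must use only edges of the induced subgraph $G[I]$. Decomposing such a $\sigma$ into disjoint cycles, the cycles of length $\ge 3$ form a collection of vertex-disjoint cycles in $G[I]$, while each $2$-cycle $(j\ \ell)$ corresponds to the edge $\{v_j, v_\ell\}$ traversed in both directions and uses the factor $a_{j\ell}a_{\ell j} = 1$. Grouping the permutations according to which subset $J \subseteq I$ is covered by the $2$-cycles, the $2$-cycles on $J$ form a perfect matching of $G[J]$ (so $J$ has even cardinality, contributing the factor $M_J$), and the cycles of length $\ge 3$ form a disjoint cycle cover of $G[I\setminus J]$ (contributing the factor $X_{I\setminus J}$). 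Hence $\operatorname{per}(A_I) = \sum_{J\subseteq I} X_{I\setminus J} M_J$. Finally, summing the normalized... — rather, summing over the $\binom{n}{k}$ diagonal basis blades $\zeta_I$ with $|I| = k$ (the note at the end: $\operatorname{tr}$ here is the \emph{unnormalized} grade-$k$ trace) yields $\operatorname{tr}(\Xi^{(k)}) = \sum_{|I|=k}\sum_{J\subseteq I} X_{I\setminus J} M_J$, as claimed.

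The step I expect to be the main obstacle is the bookkeeping in the cycle-decomposition argument: one must check carefully that the map ``fixed-point-free permutation of $I$'' $\mapsto$ ``(perfect matching on some $J$, disjoint cycle cover on $I\setminus J$)'' is a bijection onto pairs counted by $\sum_{J} X_{I\setminus J} M_J$. The two subtle points are (i) that a disjoint cycle cover of a graph, interpreted as a union of directed cycles each of which may a priori be oriented two ways, must be matched correctly against permutation cycles — for cycles of length $\ge 3$ the two orientations give two distinct permutations, so one should define $X_{I\setminus J}$ to count \emph{oriented} disjoint cycle covers (equivalently, weight each undirected cycle cover by $2^{(\#\text{cycles})}$), and I would state this convention explicitly; and (ii) that a $2$-element cycle contributes the scalar $1$ since $a_{j\ell} = a_{\ell j} \in \{0,1\}$, so each matching edge is counted with multiplicity one. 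Once the convention on $X_I$ is pinned down, everything else is the routine expansion of a product in a commutative nilpotent-generated algebra.
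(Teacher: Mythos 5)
Your proposal follows essentially the same route as the paper's proof: identify the diagonal entry $\langle \zeta_I\vert\Xi^{(k)}\vert\zeta_I\rangle$ as $\operatorname{per}(A_I)$ via the zeon expansion of $\prod_{j\in I}A(\zeta_j)$, then interpret the permanent through the disjoint-cycle decomposition of fixed-point-free permutations, with $2$-cycles giving matchings and longer cycles giving cycle covers. Your added care about the orientation convention for cycles of length $\ge 3$ (i.e.\ that $X_I$ must count directed cycle covers, or equivalently weight each undirected cover by $2^{\#\text{cycles}}$) is a legitimate point that the paper's own proof leaves implicit, but it does not change the argument.
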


\begin{proof}
Given any $k\times n$ matrix $B=(b_{ij})$, associate the $i$th row of $B$ with the zeon vector $\mathbf{b}_i:=\displaystyle\sum_{j=1}^n b_{ij}\,\zeta_j$.   For multi-index $I$ of cardinality $k$, let $B_I$ denote the coefficient submatrix of $B$ whose columns are indexed by elements of $I$.  The coefficient of $\zeta_I$ in the expansion of $\mathfrak{z}=\displaystyle \prod_{\ell=1}^k \mathbf{b}_\ell$ is then $\langle\mathfrak{z}, \zeta_I\rangle = {\rm per}(B_I)$.  

For $n\times n$ adjacency matrix $A$, one immediately finds $\langle \mathbf{v}_I\vert \Xi^{(k)}\vert \mathbf{v}_I\rangle = {\rm per}(A_I)$, where $A_I$ is the $k\times k$ submatrix of $A$ whose rows and columns are indexed by elements of $I$.  Writing $A_I=(\Xi_{ij})$ and applying the definition of the matrix permanent, 
\begin{equation}
{\rm per}(A_I)=\sum_{\sigma\in S_k} \prod_{\ell=1}^k \Xi_{\ell \sigma(\ell)}.
\end{equation}
Recall that by an elementary result of group theory, every permutation $\sigma\in S_k$ has a unique (up to order) factorization as a product of disjoint cycles.  Further recall that  $A_I$ is the adjacency matrix of the subgraph induced by vertex set $\mathbf{v}_I$.  Observing that disjoint $2$-cycles (i.e. transpositions) represent edges with no shared vertices (i.e. matchings of subgraphs), it follows that \begin{equation}
{\rm per}(A_I)=\sum_{J\subseteq I}X_{I\setminus J}M_J,
\end{equation}
where $X_{I\setminus J}$ and $M_J$ are defined as in the statement of the theorem.   Summing over all multi-indices of size $k$ then gives the trace of the induced operator.
\end{proof}

The following corollary is immediate.
\begin{corollary}[Cycle-matching convolution]
Let $A$ denote the adjacency matrix of a simple graph with vertex set $V$, and suppose $A\twoheadrightarrow \Xi\in\mathcal{L}(\mathfrak{Z}_n)$ .   Let $X_I$ and $M_{I'}$ be defined as in the statement of Proposition \ref{induced zeons}, where $I'$ is the complement of $I$ in $[n]$.  Then,
\begin{equation*}
\langle \mathbf{v}_{[n]}\vert \Xi\vert \mathbf{v}_{[n]}\rangle = {\rm tr}(\Xi^{(n)})=\displaystyle\sum_{I\subseteq V}X_IM_{I'}.
\end{equation*}
\end{corollary}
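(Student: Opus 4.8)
The plan is to specialize Proposition \ref{induced zeons} to the case $k=n$ and to exploit the fact that the top-grade subspace of $\mathfrak{Z}_n$ is one-dimensional. First I would record that the grade-$n$ component of $\mathfrak{Z}_n$ is the line $\mathbb{R}\zeta_{[n]}$; consequently the (unnormalized) operator trace ${\rm tr}(\Xi^{(n)})$ reduces to the single diagonal matrix element $\langle\zeta_{[n]}\vert\Xi^{(n)}\vert\zeta_{[n]}\rangle=\langle\mathbf{v}_{[n]}\vert\Xi^{(n)}\vert\mathbf{v}_{[n]}\rangle$. Moreover, any product of $n$ grade-one zeon elements $\mathbf{b}_1\cdots\mathbf{b}_n$ is automatically homogeneous of grade $n$: a monomial $\zeta_{j_1}\cdots\zeta_{j_n}$ vanishes unless the indices $j_\ell$ are pairwise distinct, in which case $\{j_1,\dots,j_n\}=[n]$ and the monomial equals $\zeta_{[n]}$. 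Hence $\Xi$ and $\Xi^{(n)}$ agree on $\zeta_{[n]}$, and $\langle\mathbf{v}_{[n]}\vert\Xi\vert\mathbf{v}_{[n]}\rangle=\langle\mathbf{v}_{[n]}\vert\Xi^{(n)}\vert\mathbf{v}_{[n]}\rangle={\rm tr}(\Xi^{(n)})$. This disposes of the first two asserted equalities.

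For the last equality I would invoke Proposition \ref{induced zeons} with $k=n$. The outer sum $\sum_{I\subset V,\,|I|=n}$ then has exactly one term, namely $I=V=[n]$, so the proposition yields
\begin{equation*}
{\rm tr}(\Xi^{(n)})=\sum_{J\subseteq[n]}X_{[n]\setminus J}\,M_J.
\end{equation*}
Now reindex by complementation: set $I:=[n]\setminus J$, so that $J=[n]\setminus I=I'$; as $J$ ranges over all subsets of $[n]$, so does $I$. Substituting gives $\sum_{I\subseteq V}X_I\,M_{I'}$, which is the claimed identity. Equivalently, one is simply reading off the identity ${\rm per}(A)=\sum_{I\subseteq V}X_I M_{I'}$ — already established inside the proof of Proposition \ref{induced zeons} for $A_I$ — at the single index set $I=[n]$.

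There is essentially no serious obstacle here: all of the combinatorial content (the factorization of a permutation into disjoint cycles, the identification of transpositions with matching edges, and the resulting decomposition of the permanent) has already been carried out in the proof of Proposition \ref{induced zeons}. The only points requiring a word of care are bookkeeping ones: that the relevant trace is the \emph{unnormalized} ${\rm tr}$, so that no factor of $\dim$ intrudes, and that complementation $J\mapsto[n]\setminus J$ is a bijection of the power set of $[n]$. I would therefore keep the proof to the two short paragraphs above.
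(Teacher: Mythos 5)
Your proof is correct and follows exactly the route the paper intends: the paper simply declares the corollary ``immediate'' from Proposition \ref{induced zeons}, and your specialization to $k=n$ (with the one-term outer sum and the reindexing $J\mapsto I'=[n]\setminus J$) is precisely the omitted bookkeeping. The extra observations about the one-dimensionality of the top-grade subspace and the unnormalized trace are sound and harmless.
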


\subsection{Hamiltonian cycles via fermion-zeon convolution}\label{unifying theorem subsection}

By combining properties of induced operators on fermions and zeons, one is able to count the Hamiltonian cycles in an arbitrary graph.  The following result of Goulden and Jackson~\cite{goulden} lies at the heart of the main result.  The statement of the theorem has been adapted to the notation developed herein.  

\begin{theorem}[Goulden-Jackson]
Let $H_c$ be the number of directed Hamiltonian circuits in a digraph on $n$ vertices with adjacency matrix $A$.  Then
\begin{equation*}
H_c=\sum_{I} (-1)^{|I|}{\rm det}(A_I){\rm per}(A_{I'})
\end{equation*}
where the sum is over all $I\subseteq [n]\setminus\{c\}$ for any $c\in[n]$, and ${\rm det}(A_\emptyset):=1$.
\end{theorem}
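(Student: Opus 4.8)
The plan is to prove this identity by a direct permutation expansion followed by inclusion--exclusion; it may be viewed as a signed refinement of the elementary fact (the mechanism appearing in the proof of Proposition~\ref{induced zeons}) that ${\rm per}(A)$ enumerates the disjoint directed-cycle covers of $[n]$, the point being to annihilate every cover that uses more than one cycle. First I would write ${\rm det}(A_I)=\sum_{\pi\in S_I}{\rm sgn}(\pi)\prod_{i\in I}a_{i\pi(i)}$ and ${\rm per}(A_{I'})=\sum_{\tau\in S_{I'}}\prod_{j\in I'}a_{j\tau(j)}$, where $S_I$ is the symmetric group on the index set $I$. Since $I$ and $I'$ partition $[n]$, a pair $(\pi,\tau)$ glues into a single permutation $\rho\in S_n$ with $\rho|_I=\pi$ and $\rho|_{I'}=\tau$, carrying weight ${\rm sgn}(\pi)\,w(\rho)$ where $w(\rho):=\prod_{i=1}^n a_{i\rho(i)}$. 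Conversely, $\rho$ arises from the $I$-summand precisely when $I$ is $\rho$-invariant, i.e. when $I$ is a union of cycles of $\rho$.

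Next I would interchange the order of summation: fix $\rho\in S_n$ and collect all admissible $I\subseteq[n]\setminus\{c\}$. Because $I$ must be a union of cycles of $\rho$ and must omit $c$, the cycle of $\rho$ through $c$ is forced into $I'$, while each remaining cycle of $\rho$ may be assigned freely to $I$ or to $I'$. Hence, if $\rho$ has $m$ cycles in all, the admissible $I$ correspond bijectively to subcollections of the $m-1$ cycles of $\rho$ not containing $c$.

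The crux is the sign. If $I$ is the union of cycles $C_1,\dots,C_r$ of $\rho$, then ${\rm sgn}(\rho|_I)=\prod_{t=1}^r(-1)^{|C_t|-1}$ whereas $(-1)^{|I|}=\prod_{t=1}^r(-1)^{|C_t|}$, so $(-1)^{|I|}{\rm sgn}(\rho|_I)=(-1)^r$ depends only on the number $r$ of cycles placed in $I$. Therefore the coefficient of $w(\rho)$ in $\sum_I(-1)^{|I|}{\rm det}(A_I){\rm per}(A_{I'})$ is $\sum_{r=0}^{m-1}\binom{m-1}{r}(-1)^r=(1-1)^{m-1}$, which is $0$ for $m\ge 2$ and $1$ for $m=1$. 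So only the cyclic permutations $\rho$ (a single $n$-cycle, necessarily through $c$) survive, each contributing $w(\rho)$ through the forced choice $I=\emptyset$ — exactly the place where the convention ${\rm det}(A_\emptyset):=1$ is used. The double sum thus reduces to $\sum_{\rho\text{ an }n\text{-cycle}}w(\rho)$, and since $A$ has entries in $\{0,1\}$, $w(\rho)=1$ iff every arc $(i,\rho(i))$ is present in the digraph, i.e. iff $\rho$ is a directed Hamiltonian circuit; summing yields $H_c$.

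I expect the sign bookkeeping to be the main obstacle, hand in hand with the observation that the restriction $I\subseteq[n]\setminus\{c\}$ is indispensable: were $I$ allowed to range over all of $[n]$, an $n$-cycle $\rho$ would be counted both with its single cycle in $I$ and with it in $I'$, contributing $(-1)^n{\rm sgn}(\rho)=(-1)^n(-1)^{n-1}=-1$ and $+1$ respectively, and would cancel, giving $0$. Deleting $c$ from the allowed index sets removes exactly one of these two options, which simultaneously explains why the formula holds and why its value is independent of the choice of $c\in[n]$.
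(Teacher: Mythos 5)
The paper does not prove this theorem at all: it is quoted from Goulden and Jackson's 1981 paper (where it was derived by Lagrangian/generating-function methods) and restated in the paper's notation, so there is no internal proof to compare against. Your argument is a self-contained and correct alternative. The key steps all check out: a pair $(\pi,\tau)\in S_I\times S_{I'}$ glues to $\rho\in S_n$ exactly when $I$ is $\rho$-invariant, i.e.\ a union of cycles of $\rho$; the sign bookkeeping $(-1)^{|I|}\,\mathrm{sgn}(\rho|_I)=\prod_t(-1)^{2|C_t|-1}=(-1)^r$ is right; and the binomial cancellation $\sum_{r=0}^{m-1}\binom{m-1}{r}(-1)^r=0^{\,m-1}$ kills every $\rho$ with more than one cycle, leaving precisely the $n$-cycles with $I=\emptyset$ (which is where the convention $\det(A_\emptyset)=1$ enters, as you note). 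Your closing observation --- that omitting $c$ from the allowed index sets breaks the symmetry that would otherwise cancel the single-cycle terms, and that this explains the independence of the choice of $c$ --- is exactly the right way to see why the restriction is essential. What your route buys over the original is elementarity and transparency: it is a pure sign-reversing/inclusion--exclusion computation on permutation expansions, needing no Lagrange inversion, and it is the same permanent-as-cycle-cover mechanism the paper exploits in Proposition~\ref{induced zeons}, so it fits the paper's framework more naturally than the cited derivation. The only cosmetic caveat is that the identity is really an algebraic one valid for arbitrary matrix entries (each $n$-cycle counted with weight $w(\rho)$), with the $0$--$1$ specialization giving the count of directed Hamiltonian circuits; you state this correctly.
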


Liu~\cite{liu} generalized Goulden and Jackson's result to obtain a formula somewhat better suited for consideration as fermion-zeon convolution.  In particular, the requirement of summing over subsets omitting one generator can be avoided.  In the following theorem, the graph is assumed to be undirected, and all parameters of Liu's original formulation are assumed to be zero.  Division by 2 is seen as correction for cycles appearing in two orientations.

\begin{theorem}[Liu]\label{liu's theorem}Let $G$ be a finite graph with adjacency matrix $A$, and let $H_c$ denote the number of Hamiltonian cycles in $G$.  Then,
\begin{equation*}
H_c=\frac{1}{2n}\sum_{I\subseteq [n]}(-1)^{n-|I|}|I|{\rm per}(A_I){\rm det}(A_{I'}).
\end{equation*}
\end{theorem}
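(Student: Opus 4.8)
The plan is to derive Liu's formula from the Goulden--Jackson theorem by a complementation-and-averaging argument, supplemented by the elementary observation that for an undirected graph $G$ the adjacency matrix $A$ is symmetric, so that the directed Hamiltonian circuits of $G$ occur in orientation-reversed pairs. Throughout, I write $H^{\mathrm{dir}}$ for the number of directed Hamiltonian circuits counted by Goulden--Jackson, reserving $H_c$ for the (undirected) Hamiltonian cycle count in the theorem statement.

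First I would recast the Goulden--Jackson identity in complemented form. Fix an arbitrary $c\in[n]$ and, in the sum $\sum_{I\subseteq[n]\setminus\{c\}}(-1)^{|I|}\det(A_I)\,\mathrm{per}(A_{I'})$, perform the change of index $K:=I'=[n]\setminus I$. Since $c\notin I$, the index $K$ ranges over exactly the subsets of $[n]$ containing $c$, while $|I|=n-|K|$, $I'=K$, and $I=K'$. Thus, for every fixed $c$,
\[
H^{\mathrm{dir}}\;=\;\sum_{K\colon c\in K}(-1)^{\,n-|K|}\,\mathrm{per}(A_K)\,\det(A_{K'}),
\]
where, as in Goulden--Jackson, $\det(A_\emptyset):=1$.

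Next I would average over the choice of $c$. Summing the preceding display over all $c\in[n]$ produces $n\,H^{\mathrm{dir}}$ on the left; on the right each subset $K$ is counted once for each of its $|K|$ elements, so interchanging the order of summation gives
\[
n\,H^{\mathrm{dir}}\;=\;\sum_{K\subseteq[n]}|K|\,(-1)^{\,n-|K|}\,\mathrm{per}(A_K)\,\det(A_{K'}).
\]
The empty set contributes nothing, owing to the factor $|K|$, so no separate convention for $\mathrm{per}(A_\emptyset)$ is needed. Finally, when $G$ is a simple undirected graph on $n\ge 3$ vertices, reversal of orientation is a fixed-point-free involution on the set of directed Hamiltonian circuits, pairing them two-to-one with undirected Hamiltonian cycles; hence $H^{\mathrm{dir}}=2H_c$. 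Substituting and dividing by $2n$ yields the asserted identity.

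The step requiring the most care is the orientation count $H^{\mathrm{dir}}=2H_c$: one must verify that the orientation-reversing map genuinely has no fixed points, which uses simplicity of $G$ together with $n\ge 3$, and the degenerate small cases ($n\le 2$, or graphs admitting no Hamiltonian cycle) should be dispatched separately. It is also worth a remark that Liu's original formula carries additional parameters which are being specialized to zero here, so a sentence reconciling the cited statement with this specialization is in order; with that in place, the computation above is entirely formal.
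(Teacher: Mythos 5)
Your derivation is correct as far as it goes, but it is worth noting that the paper does not actually prove this theorem at all: it is recalled verbatim (with Liu's extra parameters set to zero) from Liu's 1991 paper, so there is no internal proof to compare against. What you have supplied is a genuine, self-contained deduction of Liu's identity from the Goulden--Jackson theorem, which the paper also states without proof. The complementation $K=I'$ is carried out correctly (the condition $I\subseteq[n]\setminus\{c\}$ translates exactly to $c\in K$, with $|I|=n-|K|$, so the sign $(-1)^{|I|}$ becomes $(-1)^{n-|K|}$), and the averaging over $c$ correctly produces the weight $|K|$ since each $K$ is hit once for each of its elements; the factor $|K|$ also disposes of the empty set, as you observe. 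The remaining ingredient, $H^{\mathrm{dir}}=2H_c$ for a simple undirected graph viewed as a symmetric digraph, is exactly the ``division by 2 \dots\ correction for cycles appearing in two orientations'' that the paper mentions in prose, and your caveat about $n\le 2$ is warranted (for $n=2$ the formula literally yields $1/2$ for a single edge, so the degenerate cases genuinely must be excluded rather than merely ``dispatched''). The one thing your argument buys beyond the paper is transparency: it shows that Liu's weighted, unrestricted sum is nothing more than the $c$-average of Goulden--Jackson's restricted sums, which is a cleaner justification than an appeal to Liu's original Lagrangian-methods proof. Conversely, be aware that this route inherits whatever hypotheses Goulden--Jackson needs (a loopless digraph, $\det(A_\emptyset)=1$), and that your proof establishes Liu's formula only in the specialized, parameter-free form stated here, not Liu's general version.
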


To make the concept of fermion-zeon convolution rigorous, let $\varphi\in\mathcal{L}(\mathfrak{F}_n)$, let $\xi\in\mathcal{L}(\mathfrak{Z}_n)$, and define combinatorial integrals on $\mathcal{L}(\mathfrak{F}_n)\otimes\mathcal{L}(\mathfrak{Z}_n)$ by  \begin{eqnarray*}
\int (\varphi\ast\xi)(\mathbf{v}_I)\,d\gamma_I\,d\zeta_{I'}&:=&\int(\varphi(\gamma_I)\otimes\xi(\zeta_{I'}))\,d\gamma_I\,d\zeta_{I'}\\
&=&\langle \varphi(\gamma_I)\otimes\xi(\zeta_{I'}), \gamma_I\otimes\zeta_{I'}\rangle,
\end{eqnarray*}
where $I'=[n]\setminus I$ is the complement of $I$ in the $n$-set.  In other words, the value of the integral is the coefficient of $\gamma_I\otimes\zeta_{I'}$ in the canonical expansion of $(\varphi(\gamma_I)\otimes\xi(\zeta_{I'}))\in\mathfrak{F}_n\otimes\mathfrak{Z}_n$.  The {\em fermion-zeon convolution} of $\varphi$ and $\xi$ is then defined by \begin{eqnarray}
\int_{\mathfrak{FZ}} (\varphi\ast\xi)\,d\gamma\,d\zeta:&=& \sum_{I\subseteq[n]}\int(\varphi\ast\xi)(\mathbf{v}_I) \,d\gamma_I\,d\zeta_{I'}\nonumber\\
&=&\sum_{I\subseteq[n]}\langle (\varphi(\gamma_I)\otimes\xi(\zeta_{I'})), \gamma_I\otimes\zeta_{I'}\rangle.
\end{eqnarray}

To see how Liu's result naturally appears as fermion-zeon convolution,  one first defines the {\em star dual} of  $\Xi\in\mathcal{L}(\mathfrak{Z}_n)$ by \begin{equation}
\langle \zeta_I\vert \Xi^\star\vert \zeta_J\rangle:=\langle \zeta_{I'}\vert\Xi\vert\zeta_{J'}\rangle
\end {equation}
for $I,J\subseteq[n]$, where $I'$ and $J'$ denote the set complements of $I$ and $J$, respectively, in the $n$-set.  Second, for operators $X$, $Y$ on spaces of equal finite dimension, denote the componentwise product of operators $X$, $Y$ by $X\odot Y$.  In this way, the $\odot$ product of $\Psi\in\mathcal{L}(\mathcal{C}\ell_Q(V))$ and $\Xi^\star\in\mathcal{L}(\mathfrak{Z}_n)$ satisfies 
\begin{equation}
\langle \mathbf{v}_I\vert\Psi\odot\Xi^\star\vert \mathbf{v}_J\rangle =\langle \mathbf{v}_I\vert \Psi\vert \mathbf{v}_J\rangle\langle \mathbf{v}_I\vert \Xi^\star\vert \mathbf{v}_J\rangle
\end{equation}
for all $I, J\subseteq[n]$.

Letting $\sigma$ denote the diagonal operator on the power set $2^V$ defined by \begin{equation}
\langle \mathbf{v}_I\vert \sigma\vert \mathbf{v}_I\rangle:=(-1)^{|I|}|I'|,
\end{equation}
 the unifying theorem can now be stated.
 
\begin{theorem}[FZ Convolution]\label{hc}
Let $G$ be a simple graph on $n$ vertices having adjacency matrix $A$.  Suppose $A\twoheadrightarrow \Phi\in\mathcal{L}(\mathfrak{F}_n)$ and $A\twoheadrightarrow \Xi\in\mathcal{L}(\mathfrak{Z}_n)$.  Then, the number $H_c$ of Hamiltonian cycles in $G$ is given by the following:
\begin{equation}
H_c=\frac{1}{2n}{\rm tr}(\sigma\Phi\odot\Xi^\star).
\end{equation}
\end{theorem}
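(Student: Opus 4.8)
The plan is to evaluate the diagonal matrix elements of $\sigma\Phi\odot\Xi^\star$ one index set at a time and to recognize the resulting sum as Liu's formula (Theorem \ref{liu's theorem}). First I would observe that the fermion computation carried out in the proof of Proposition \ref{spanning trees via fermions} — the coefficient of $\gamma_I$ in the ordered product $\prod_{\ell=1}^{|I|}\mathbf{b}_\ell$ of rows is $\det(B_I)$ — is not special to level $n-1$: applied to the adjacency matrix it gives $\langle\mathbf{v}_I\vert\Phi\vert\mathbf{v}_I\rangle=\det(A_I)$ for every $I\subseteq[n]$, where $A_I$ is the principal submatrix of $A$ on the rows and columns in $I$. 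Dually, the zeon computation in the proof of Proposition \ref{induced zeons} gives $\langle\mathbf{v}_J\vert\Xi\vert\mathbf{v}_J\rangle={\rm per}(A_J)$; combining this with the definition of the star dual yields $\langle\mathbf{v}_I\vert\Xi^\star\vert\mathbf{v}_I\rangle=\langle\zeta_{I'}\vert\Xi\vert\zeta_{I'}\rangle={\rm per}(A_{I'})$.

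Next I would assemble the trace. Since $\sigma$ is diagonal with $\langle\mathbf{v}_I\vert\sigma\vert\mathbf{v}_I\rangle=(-1)^{|I|}|I'|$, and $\odot$ multiplies matrix elements entrywise relative to the common set-indexed basis (under the fixed identifications $\gamma_I\leftrightarrow\mathbf{v}_I\leftrightarrow\zeta_I$), the diagonal entries are
$$\langle\mathbf{v}_I\vert\sigma\Phi\odot\Xi^\star\vert\mathbf{v}_I\rangle=(-1)^{|I|}|I'|\,\langle\mathbf{v}_I\vert\Phi\vert\mathbf{v}_I\rangle\,\langle\mathbf{v}_I\vert\Xi^\star\vert\mathbf{v}_I\rangle=(-1)^{|I|}|I'|\,\det(A_I)\,{\rm per}(A_{I'}).$$
Summing over $I\subseteq[n]$ gives ${\rm tr}(\sigma\Phi\odot\Xi^\star)=\sum_{I\subseteq[n]}(-1)^{|I|}|I'|\det(A_I){\rm per}(A_{I'})$. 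Re-indexing by the complement $J=I'$ (so $|I|=n-|J|$, $|I'|=|J|$, $A_I=A_{J'}$, $A_{I'}=A_J$, with the convention $\det(A_\emptyset)=1$) turns this into $\sum_{J\subseteq[n]}(-1)^{n-|J|}|J|\,{\rm per}(A_J)\,\det(A_{J'})$, which equals $2nH_c$ by Theorem \ref{liu's theorem}. Dividing by $2n$ completes the proof. It is also worth remarking, to justify the name, that for any $\Psi\in\mathcal{L}(\mathfrak{F}_n)$ and $\Xi\in\mathcal{L}(\mathfrak{Z}_n)$ one has ${\rm tr}(\Psi\odot\Xi^\star)=\sum_{I\subseteq[n]}\langle\mathbf{v}_I\vert\Psi\vert\mathbf{v}_I\rangle\langle\mathbf{v}_{I'}\vert\Xi\vert\mathbf{v}_{I'}\rangle=\int_{\mathfrak{FZ}}(\Psi\ast\Xi)\,d\gamma\,d\zeta$, so the theorem literally expresses $H_c$ as the fermion-zeon convolution of $\sigma\Phi$ with $\Xi$.

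The argument is short, and the part needing care is purely bookkeeping: confirming that $\odot$ between an operator on $\mathfrak{F}_n$ and one on $\mathfrak{Z}_n$ is well defined through the set-indexed bases, checking that extracting $\det(A_I)$ from a product of $\gamma_j$'s in the non-commutative algebra introduces no stray sign (the permutation signs are exactly what assemble the determinant, as in Proposition \ref{spanning trees via fermions}), and verifying that the complementation $J=I'$ together with the star dual correctly carries the weight $(-1)^{|I|}|I'|$ onto Liu's weight $(-1)^{n-|J|}|J|$ while interchanging the roles of $\det$ and ${\rm per}$. Keeping the complement and the $\det$/${\rm per}$ placement mutually consistent is the one place an error could plausibly enter.
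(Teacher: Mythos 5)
Your proof is correct and follows essentially the same route as the paper: both reduce the claim to Liu's formula (Theorem \ref{liu's theorem}) by identifying $\langle\mathbf{v}_I\vert\Phi\vert\mathbf{v}_I\rangle=\det(A_I)$ and $\langle\mathbf{v}_I\vert\Xi\vert\mathbf{v}_I\rangle={\rm per}(A_I)$, re-indexing over complements, and absorbing the weight $(-1)^{|I|}|I'|$ into $\sigma$ via the star dual; you merely run the chain of equalities from the trace side rather than from Liu's side. Your explicit appeal to the determinant/permanent computations in Propositions \ref{spanning trees via fermions} and \ref{induced zeons} makes rigorous a step the paper leaves implicit, but it is not a different argument.
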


\begin{proof}
The proof of Theorem \ref{hc} is now an easy corollary of Theorem \ref{liu's theorem}.  
\begin{eqnarray*}
H_c&=&\frac{1}{2n}\sum_{I\subseteq [n]}(-1)^{n-|I|}|I|{\rm per}(A_I){\rm det}(A_{I'})\\
&=&\frac{1}{2n}\sum_{I\in 2^{[n]}}(-1)^{n-|I|}|I|\langle \mathbf{v}_{I'}\vert \Phi\vert \mathbf{v}_{I'}\rangle\langle \mathbf{v}_{I} \vert \Xi\vert \mathbf{v}_{I}\rangle\\
&=&\frac{1}{2n}\sum_{J\in 2^{[n]}}(-1)^{|J|}|J'|\langle \mathbf{v}_{J}\vert \Phi\vert \mathbf{v}_{J}\rangle\langle \mathbf{v}_{J'} \vert \Xi\vert \mathbf{v}_{J'}\rangle\\
&=&\frac{1}{2n}\sum_{J\in 2^{[n]}}(-1)^{|J|}|J'|\langle \mathbf{v}_{J}\vert \Phi\vert \mathbf{v}_{J}\rangle\langle \mathbf{v}_{J} \vert \Xi^\star\vert \mathbf{v}_{J}\rangle\\
&=&\frac{1}{2n}{\rm tr}(\sigma\Phi\odot\Xi^\star).
\end{eqnarray*}
\end{proof} 
The fermion-zeon convolution interpretation is made clear by observing the following: \begin{eqnarray*}
H_c&=&\frac{1}{2n}\sum_{J\in 2^{[n]}}(-1)^{|J|}|J'|\langle \mathbf{v}_{J}\vert \Phi\vert \mathbf{v}_{J}\rangle\langle \mathbf{v}_{J} \vert \Xi^\star\vert \mathbf{v}_{J}\rangle\\
&=&\frac{1}{2n}\sum_{J\in 2^{[n]}}\langle\sigma(\mathbf{v}_J), \mathbf{v}_J\rangle\int (\Phi(\mathbf{v}_J)\otimes \Xi(\mathbf{v}_{J'}) )(d\gamma_J\otimes d\zeta_{J'})\\
&=&\frac{1}{2n}\sum_{J\in 2^{[n]}}\int (\sigma\Phi(\mathbf{v}_J)\otimes \Xi(\mathbf{v}_{J'}) )(d\gamma_J\otimes d\zeta_{J'})\\
&=&\frac{1}{2n}\int_{\mathfrak{FZ}}(\sigma\Phi\ast \Xi)\, d\gamma\,d\zeta.
\end{eqnarray*}

\section{Conclusion}

The results above lend a deeper physical interpretation of the author's earlier results developed using nilpotent adjacency matrices~\cite{JPA,ICCA7}.   It was shown previously by the author that labeling a simple graph's vertices with zeon generators $\{\zeta_i: i=1, \ldots, n\}$ and constructing the associated  nilpotent adjacency matrix $\mathfrak{A}$ allows one to count Hamiltonian cycles by considering the trace of the $n$th power of $\mathfrak{A}$: \begin{equation*}
{\rm tr}(\mathfrak{A}^n)=2n\,H_c\,\zeta_{[n]}.
\end{equation*}
By Theorem \ref{hc}, the coefficent of $\zeta_{[n]}$ in ${\rm tr}(\mathfrak{A}^n)$ is now given by fermion-zeon convolution as
\begin{equation}
\langle {\rm tr}(\mathfrak{A}^n), \zeta_{[n]}\rangle=\displaystyle{\rm tr}(\sigma\Phi\odot\Xi^\star).
\end{equation}

\subsection*{Acknowledgment}
The author thanks Philip Feinsilver for comments and discussions.


\begin{thebibliography}{1}

\bibitem{ABGO} L.~Accardi, A.~Ben~Ghorbal, N.~Obata, \emph{Monotone independence, comb  graphs and {B}ose-{E}instein condensation}, Infin. Dimens. Anal. Quantum Probab. Relat. Top. \textbf{7} (2004), 419--435.

\bibitem{budinich} M.~Budinich, P.~Budinich, A spinorial formulation of the maximum clique problem of a graph, {\em J. Math. Phys.}, {\bf 47} (2006), 043502.


\bibitem{Feinsilverzeons} P.~Feinsilver.  Zeon algebra, Fock space, and
Markov chains, {\em Commun. Stoch. Anal.} {\bf 2} (2008), 263-275.

\bibitem{FeinsilverMcSorleyIJC} P.~Feinsilver, J.~McSorley.  Zeons, permanents, the Johnson scheme, and generalized derangements, {\em International Journal of Combinatorics}, Article ID 539030, 29 pages, (2011).\\
 \url{dx.doi.org/10.1155/2011/539030}

\bibitem{goulden} I.P.~Goulden, D.M.~Jackson. The enumeration of directed closed Euler trails and directed Hamiltonian circuits by Lagrangian methods, {\em Europ. J. Combinatorics}, {\bf 2} (1981), 131-135.

\bibitem{HarrisStaples2012} G.~Harris, G.S.~Staples. Spinorial formulations of graph problems, {\em Advances in Applied Clifford Algebras}, {\bf 22} (2012), 59-77.\\
 \url{dx.doi.org/10.1007/s00006-011-0298-0}.

\bibitem{hashimoto} Y.~Hashimoto, A.~Hora, N.~Obata. {\em Central limit theorems for large graphs: Method of quantum decomposition}, J. Math. Phys. \textbf{44} (2003),  71--88.

\bibitem{kirchhoff} G.~Kirchhoff.  \"{U}ber die Aufl\"{o}sung der Gleichungen, auf welche man bei der untersuchung der linearen verteilung galvanischer Str\"{o}me gef\"{u}hrt wird, {\em Ann. Phys. Chem.}, {\bf 72} (1847),  497--508.

\bibitem{liu} C.J.~Liu. Enumeration of Hamiltonian cycles and paths in a graph, {\em Proc. Amer. Math. Soc.}, {\bf 111} (1991), 289-296.

\bibitem{NetoSIAM} A.~F.~Neto. P.H.R.~dos Anjos, Zeon algebra and combinatorial identities, {\em SIAM Review}, {\bf 56} (2014), 353-370.

\bibitem{JPA} R.~Schott, G.S.~Staples, Nilpotent adjacency matrices, random graphs, and quantum random variables, {\it J. Phys. A: Math. Theor.}, {\bf 41} 155205, (2008).

\bibitem{YEUJC} R.~Schott, G.S.~Staples. Partitions and Clifford algebras, {\em Eur. J. Comb.} {\bf 29} (2008) 1133-1138.

\bibitem{CAMWA} R.~Schott, G.S.~Staples.  Complexity of counting cycles using zeons, {\em Computers and Mathematics with Applications}, {\bf 62} 1828--1837, (2011).\\
 \url{dx.doi.org/10.1016/j.camwa.2011.06.026}

\bibitem{CoSA} R.~Schott, G.S.~Staples.  Zeons, lattices of partitions, and free probability, {\em Comm. Stoch. Anal.}, {\bf  4} (2010), 311--334.
 
\bibitem{JTP} G.S.~Staples. Graph-theoretic approach to stochastic integrals with Clifford algebras, {\it J. Theor. Prob.}, {\bf 20} 257-274, (2007).

\bibitem{ICCA7} G.S.~Staples. A new adjacency matrix for finite graphs,  {\it Advances in Applied Clifford Algebras}, {\bf 18} 979-991, (2008).


\end{thebibliography}
\end{document}